\def\today{\ifcase\month\or
  January\or February\or March\or April\or May\or June\or=
  July\or August\or September\or October\or November\or December\fi
  \space\number\day, \number\year}
 \newtheorem{theorem}{Theorem}
 \newtheorem{lemma}[theorem]{Lemma}
 \newtheorem{corollary}[theorem]{Corollary}
 \theoremstyle{definition}
 \theoremstyle{remark}
 \newcommand{\R}{\mathbb{R}}
 \newcommand{\N}{\mathbb{N}}
 \newcommand{\Z}{\mathbb{Z}}
 \newcommand{\dy}{\text{\rm d}y}
\newcommand{\wt}{\widetilde}
\newcommand{\var}{{\rm Var\,}}
\begin{document}

\title[Sharp inequalities for discrete maximal operators]{Sharp inequalities for the variation \\ of the discrete maximal function}
\author[Carneiro and Madrid]{
Jos\'e Madrid
}
\date{\today}
\subjclass[2010]{26A45, 42B25, 39A12, 46E35, 46E39.}
\keywords{Sobolev spaces, discrete maximal operators, bounded variation}

\address{IMPA - Instituto de Matem\'{a}tica Pura e Aplicada, Estrada Dona Castorina 110, Rio de Janeiro - RJ, Brazil, 22460-320.}
\email{josermp@impa.br}

\allowdisplaybreaks
\numberwithin{equation}{section}

\maketitle

\begin{abstract}In this paper 
we establish new optimal bounds for the derivative of some discrete maximal functions, both in the centered and uncentered versions. In particular, we solve a question originally posed by Bober, Carneiro, Hughes and Pierce in \cite{BCHP}.
\end{abstract}

\section{Introduction}
\subsection{Background} Let $M$ denote the centered Hardy-Littlewood maximal operator on $\R^d$, i.e. for $f \in L^1_{loc}(\R^d)$,
\begin{equation}\label{Intro_max}
Mf(x) = \sup_{r >0} \frac{1}{m(B_r(x))} \int_{B_r(x)} |f (y)|\,\dy\,,
\end{equation}
where $B_r(x)$ is the ball centered at $x$ with radius $r$ and $m(B_r(x))$ is its $d$-dimensional Lebesgue measure. One of the classical results in harmonic analysis states that $M:L^p(\R^d) \to L^p(\R^d)$ is a bounded operator for $1<p \leq \infty$. For $p=1$ we have $M: L^1(\R^d) \to L^{1,\infty}(\R^d)$ bounded. In 1997,  Kinunnen \cite{Ki} showed that $M: W^{1,p}(\R^d) \to W^{1,p}(\R^d)$ is bounded for $1 < p \leq \infty$, and that was the starting point on the study of the regularity of maximal operators acting on Sobolev functions. This result was later extended to multilinear, local and fractional contexts in \cite{CM, KL, KiSa}. Due to the lack of reflexivity of $L^1$, results for $p=1$ are subtler. For instance, in \cite[Question 1]{HO}, Haj\l asz and Onninen asked whether the operator $f \mapsto |\nabla Mf|$ is bounded from $W^{1,1}(\R^d)$ to $L^1(\R^d)$. Progress on this question (and its variant for BV-functions) has been restricted to dimension $d=1$.

\smallskip

Let $\wt{M}$ denote the uncentered maximal operator (defined similarly as in \eqref{Intro_max}, with the supremum taken over all balls containing the point $x$ in its closure). Refining the work of Tanaka \cite{Ta}, Aldaz and P\'{e}rez L\'{a}zaro \cite{AP} showed that if $f$ is of bounded variation then $\wt{M}f$ is absolutely continuous and
\begin{equation}\label{Intro_AP}
\var \wt{M}f \leq \var f,
\end{equation}
where $\var f$ denotes the total variation of $f$. Observe that inequality \eqref{Intro_AP} is sharp. More recently, Kurka \cite{Ku} considered the centered maximal operator in dimension $d=1$ and proved that 
\begin{equation}\label{Intro_Ku}
\var Mf \leq 240,004\, \var f.
\end{equation}
It is currently unknown if one can bring down the value of such constant to $C=1$ in the centered case. Other interesting works related to this theory are \cite{ACP, CFS, CS, HM, Lu1, St}.

\subsection{Discrete setting} In this paper we consider issues of similar flavor, now in the discrete setting. Let us start with some definitions. 

\smallskip

We denote  a vector $\vec{n} \in \Z^d$ by $\vec{n} = (n_1, n_2, \ldots, n_d)$. For a function $f:\mathbb{Z}^{d}\rightarrow \mathbb{R}$ 
we define its $\ell^{p}$-norm as usual:
\begin{equation}\label{Intro_l_p_norm}
\|f\|_{\ell^{p}{( \Z^{d})}}= \left(\sum_{\vec n\in \Z^{d}} {|f(\vec n)|^{p}}\right)^{1/p},
\end{equation}
if $1\leq p<\infty$, and
\begin{equation*}
\|f\|_{\ell^{\infty}{(\Z^{d})}}= \sup_{\vec n\in\Z^{d} }{|f(\vec n)|}.
\end{equation*}
We define its total variation $\var f$ by
$$
\var f= \sum_{i=1}^d \sum_{\vec n \in \Z^d} \big| f(\vec n+\vec e_{i})-f(\vec n)\big|,
$$
where $\vec e_{i}=(0,0,\ldots,1,\ldots,0)$ is the canonical $i-$th base vector. Also, we say that a function $f:\Z^{d}\to\R$ is a {\it delta function} if
there exist $\vec p\in\Z^{d}$ and $k\in\R$, such that
$$
f(\vec p)=k\ \ \ \text{and}\ \ \ f(\vec n)=0 \ \ \forall\ \vec n \in\Z^{d}\setminus\{\vec p\}.
$$

\subsubsection{A sharp inequality in dimension one} For $f:\Z\to \R$ we define its centered Hardy-Littlewood maximal function $Mf :\Z \to \R^+$ as 
\begin{equation*}
Mf(n) = \sup_{r  \in \Z^+} \frac{1}{(2r+1)} \sum_{k=-r}^r |f(n+k)|,
\end{equation*}
while the uncentered maximal function $\wt{M}f :\Z \to \R^+$ is given by
\begin{equation*}
\wt{M}f(n) = \sup_{r,s  \in \Z^+} \frac{1}{(r +s +1)} \sum_{k=-r}^s |f(n+k)|.
\end{equation*}
In \cite{BCHP}, Bober, Carneiro, Hughes and Pierce proved the following inequalities
\begin{equation}\label{obj 0}
\var \wt Mf \leq  \var f \leq 2\|f\|_{\ell^{1}(\Z)}
\end{equation}
and
\begin{equation}\label{obj 1}
\var Mf\leq \left(2+\frac{146}{315}\right) \|f\|_{\ell^{1}(\Z)}. 
\end{equation}
The leftmost inequality in \eqref{obj 0} is the discrete analogue of \eqref{Intro_AP}. The rightmost inequality in \eqref{obj 0} is simply the triangle inequality. Both inequalities in \eqref{obj 0} are in fact sharp (e.g. equality is attained if $f$ is a delta function). On the other hand, inequality \eqref{obj 1} is not optimal, and it was asked in \cite{BCHP} whether the sharp constant for \eqref{obj 1} is in fact $C=2$. Our first result answers this question affirmatively, also characterizing the extremal functions.

\begin{theorem}\label{lim d=1 C=2}
Let $f:\Z\to\R$ be a function in $\ell^{1}(\Z).$ Then
\begin{equation}\label{main theo cent d=1}
\var Mf\leq 2\,\|f\|_{\ell^{1}(\Z)},
\end{equation}
and the constant $C=2$ is the best possible. Moreover, the equality is attained if and only if $f$ is a delta function.
\end{theorem}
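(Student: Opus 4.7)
The plan is to first reduce to $f \geq 0$ in $\ell^{1}(\Z)$: since $Mf = M|f|$ and $\|\,|f|\,\|_{\ell^{1}(\Z)} = \|f\|_{\ell^{1}(\Z)}$, this assumption is free. With $f$ non-negative, $Mf(n) \to 0$ as $|n| \to \infty$, so $Mf$ has a discrete peak--valley structure; enumerate its local maxima (one per plateau) as $p_1 < p_2 < \ldots < p_N$, with heights $h_j := Mf(p_j)$, and its interleaving local minima as $v_1 < \ldots < v_{N-1}$, with heights $l_j := Mf(v_j)$. An elementary telescoping then yields
\begin{equation*}
\var Mf \;=\; 2\sum_{j=1}^{N} h_j \;-\; 2\sum_{j=1}^{N-1} l_j,
\end{equation*}
or equivalently $\var Mf = 2\int_0^\infty N(t)\,\dt$ via a discrete co-area formula, where $N(t)$ counts the connected components of the super-level set $\{n : Mf(n) > t\}$. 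In this formulation the theorem reduces to the key estimate
\begin{equation*}
\sum_{j=1}^{N} h_j \;-\; \sum_{j=1}^{N-1} l_j \;\leq\; \|f\|_{\ell^{1}(\Z)}.
\end{equation*}

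To attack this I would exploit the optimality of the averaging radius at each peak. Let $r_j \geq 0$ be the smallest radius realising $h_j = \frac{1}{2r_j+1}\sum_{k=-r_j}^{r_j} f(p_j+k)$. The fact that neither $r_j - 1$ nor $r_j + 1$ improves the average forces two boundary mass inequalities, namely $f(p_j - r_j) + f(p_j + r_j) \geq 2 h_j$ when $r_j \geq 1$ (with the trivial $h_j = f(p_j)$ when $r_j = 0$), together with the complementary upper bound $f(p_j - r_j - 1) + f(p_j + r_j + 1) \leq 2 h_j$. The strategy is then to assign to each peak $p_j$ a subset $E_j$ of the support of $f$ with $\sum_{k \in E_j} f(k) \geq h_j$, chosen so that the $E_j$ are pairwise disjoint; any unavoidable overlap should be paid for by the subtracted valley heights $l_j$ on the left-hand side.

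The main obstacle is that the natural candidate $E_j = I_j := [p_j - r_j,\, p_j + r_j]$ can genuinely overlap across distinct peaks, so a naive disjoint-intervals argument is too weak. For instance, with $f(0) = 0$, $f(\pm 1) = 1$, $f(\pm 2) = 2$, one checks that the origin is a peak of $Mf$ with $r_0 = 2$, while $\pm 2$ are peaks with $r_{\pm 2} = 0$, so $I_0 = [-2,2]$ meets both $I_{\pm 2}$. Handling this properly requires a delicate pairwise analysis of adjacent peaks whose optimal intervals overlap, showing that the valley height between them is large enough to absorb the overcount; this pairwise adjustment is where I expect the technical heart of the proof to lie. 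Finally, the characterisation of equality follows by reading the chain of inequalities in reverse: equality forces $N = 1$ (a single peak and no internal valleys), then forces $r_1 = 0$ (so that $h_1 = f(p_1)$), and then forces $f(p_1) = \|f\|_{\ell^{1}(\Z)}$, so $f$ must be a delta function.
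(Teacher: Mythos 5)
Your reduction to $f\ge 0$, the telescoping identity $\var Mf = 2\sum_j h_j - 2\sum_{j} l_j$, and the two boundary inequalities at a peak (e.g.\ $f(p_j-r_j)+f(p_j+r_j)\ge 2h_j$ for the minimal optimal radius) are all correct, but they only restate the theorem: the claimed key estimate $\sum_j h_j-\sum_j l_j\le \|f\|_{\ell^1(\Z)}$ \emph{is} the inequality to be proved, and the step that would establish it — the ``delicate pairwise analysis of adjacent peaks whose optimal intervals overlap, showing that the valley height between them is large enough to absorb the overcount'' — is never carried out. This is precisely where the difficulty of the problem lives: the peak-by-peak mass-assignment scheme is the strategy of Bober--Carneiro--Hughes--Pierce, and executed along these lines it only yields the constant $2+\tfrac{146}{315}$, because the optimal intervals of several (not just two adjacent) peaks can overlap and no quantitative lower bound on the intermediate valley heights is stated or proved in your proposal. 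As written, the proposal is a plan with the central lemma missing, and the equality characterisation (``reading the chain of inequalities in reverse'') also cannot be completed, since the chain whose equality case you would analyse does not yet exist; in particular the assertion that equality forces $N=1$ is unjustified.

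The paper proceeds by a different accounting that avoids the overlap problem altogether. It bounds each difference by comparing radii shifted by one, namely $Mf(n)-Mf(n+1)\le A_{r_n}f(n)-A_{r_n+1}f(n+1)$ on the set where $Mf$ decreases (and symmetrically where it increases), and then \emph{fixes a point $p$ in the support of $f$} and sums the weight with which $f(p)$ enters all these differences. A short case analysis shows that the contribution of $f(p)$ at a site at distance $k$ from $p$ is at most $\frac{1}{2k+1}-\frac{1}{2k+3}$, and summing the telescoping series over both sides of $p$ gives exactly $2$ per unit of mass, hence $\var Mf\le 2\|f\|_{\ell^1(\Z)}$; the equality analysis (forcing $r_n=n-p$ for all $n\ge p$, which is incompatible with two distinct support points) then comes for free. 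If you want to salvage your approach you would need to prove the disjointification-with-valley-correction lemma in full generality, which is a substantially harder combinatorial task than the per-mass-point bookkeeping the paper uses.
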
  

\noindent {\sc{Remark:}} In \cite{Te}, Temur proved the analogue of \eqref{Intro_Ku} in the discrete setting, i.e. 
\begin{equation}\label{Te_Var}
\var Mf \leq  C \ \var f 
\end{equation}
with constant $C=(72000)2^{12}+4$. This inequality is qualitatively stronger that \eqref{main theo cent d=1} (in fact, $\var f$ should be seen as the natural object to be on the right-hand side), but it does not imply \eqref{main theo cent d=1}. By triangle inequality, inequality \eqref{main theo cent d=1} suggests that it may be possible to prove \eqref{Te_Var} with constant $C=1$, but this is currently an open problem. 

\subsubsection{Sharp inequalities in higher dimensions} We now aim to extend Theorem \ref{lim d=1 C=2} to higher dimensions. In order to do so, we first recall the notion of maximal operators associated to regular convex sets as considered in \cite{CH}.

\smallskip

Let $\Omega\subset \mathbb R^{d}$ be a bounded open convex set with Lipschitz boundary, such that $\vec 0\in$ int$(\Omega)$ and that $\pm \vec e_{i} \in \overline\Omega$ for $1 \leq i \leq d$. 
For $r>0$ we write 
\begin{equation*}
\overline\Omega_{r}(\vec{x}_{0}) =\big\{\vec{x} \in\mathbb R^{d}; \, r^{-1}(\vec{x}-\vec{x}_{0})\in \overline{\Omega}\big\},
\end{equation*}
and for $r=0$ we consider
\begin{equation*}
\overline\Omega_{0}(\vec{x}_{0}) =\{\vec{x}_{0}\}.
\end{equation*}
Whenever $\vec{x}_{0}=\vec 0$ we shall write $\overline\Omega_{r}=\overline\Omega_{r}\big(\vec{{0}}\big) $ for simplicity. This object plays the role of the ``ball of center $\vec x_{0}$ and radius $r$" in our maximal operators below. For instance, to work with regular $\ell^{p}-$balls, one should consider $\Omega=\Omega_{\ell^p}=\{\vec x\in\R^{d}; \|\vec x\|_{p}<1 \}$, where $\|\vec x\|_{p}=(|x_{1}|^{p}+|x_{2}|^{p}+\ldots+|x_{d}|^{p})^{\frac{1}{p}} $ for $\vec x=(x_{1},x_{2},\ldots,x_{d})\in \R^{d}$ and $1 \leq p < \infty$, and $\|\vec x\|_{\infty}=\max\{|x_1|, |x_2|, \ldots, |x_d|\}$.

\smallskip

Given  $f:\Z^d \to \R$, we denote by $A_{r}f(\vec n)$ the average of $|f|$ over the $\Omega$-ball of center $\vec n$ and radius $r$, i.e.
\begin{equation*}
A_{r}f(\vec n) = \frac{1}{N(r)}\,\sum_{\vec m\in \overline\Omega_{r} }{|f(\vec n+\vec m)}|,
\end{equation*}
where $N(\vec x,r)$ is the number of the lattice points in the set $\overline\Omega_{r}(\vec x)$\ (and $N(r):=N(\vec 0,r)$). We denote by $M_{\Omega}$ the discrete centered  maximal operator associated to $\Omega$,
\begin{equation}\label{Intro_disc_Omega_cent}
M_{\Omega}f(\vec n)=\sup_{r\geq 0}A_{r}f(\vec n)=\sup_{r\geq 0} \frac{1}{N(r)}\,\sum_{\vec m\in \overline\Omega_{r} }{|f(\vec n+\vec m)}|,
\end{equation}
and we denote by $\widetilde{M}_{\Omega}$ its uncentered version
\begin{equation}\label{Intro_disc_Omega_uncent}
\widetilde{M}_{\Omega}f(\vec n)=\sup_{\overline\Omega_{r}(\vec x_{0}) \owns \vec n}\,A_{r}f(\vec x_{0}) =\sup_{\overline\Omega_{r}(\vec x_{0}) \owns \vec n}\, \frac{1}{N(\vec x_{0},r)}\,{\sum_{\vec m\in \overline\Omega_{r}(\vec x_{0}) }{|f( \vec m)}|}.
\end{equation}
It should be understood throughout the rest of the paper that we always consider $\Omega$-balls with at least one lattice point. These convex $\Omega$--balls have roughly the same behavior as the regular Euclidean balls from the geometric and arithmetic points of view, in the sense that for large radii, the number of lattice points inside the $\Omega$-ball is roughly equal to the volume of the $\Omega$-ball (see \cite[Chapter VI \S 2, Theorem 2]{Lang}).

\smallskip

Given $1\leq p< \infty$ and $f\in \ell_{loc}^{1}(\Z^{d})$, we denote by $M_{p}$ the discrete centered maximal operator associated to $\Omega_{\ell^p}$, 
\begin{equation*}
M_{p}f(\vec n)=M_{\Omega_{\ell^p}}f(\vec n),
\end{equation*}
and for $p=\infty$, we denote
$$
Mf(\vec n)=M_{\Omega_{\ell^\infty}}f(\vec n).
$$
Analogously, we denote by $\wt M_{p}f$ and $\wt Mf$ the uncentered versions of the discrete maximal operators associated to $\Omega_{\ell^p}$, for $1 \leq p \leq \infty$. Note that in dimension $d=1$ we have $M_{p}=M \ \text{and}\ \wt M_{p}=\wt M$  for all $1\leq p\leq \infty$.

\smallskip

In \cite{CH}, Carneiro and Hughes showed that, for any regular set $\Omega$ as above and $f:\Z^{d}\to\R$, there exist constants $C(\Omega,d)$ and $\wt C(\Omega,d)$ such that

\begin{equation}\label{obj 2}
\var M_{\Omega}f\leq C(\Omega,d)\|f\|_{\ell^{1}(\Z^{d})} 
\end{equation}
and
\begin{equation}\label{obj 3}
\var \wt M_{\Omega}f\leq \wt C(\Omega,d)\|f\|_{\ell^{1}(\Z^{d})}.
\end{equation}
Inequalities \eqref{obj 2} and \eqref{obj 3} were extended to a fractional setting in \cite[Theorem 3]{CMa}. Here we extend Theorem \ref{lim d=1 C=2} to higher dimensions in two distinct ways. We find the sharp form of \eqref{obj 2}, when $d\geq 1$ and $\Omega=\Omega_{\ell^1}$ (i.e. rombus), and the sharp form of \eqref{obj 3}, when $d\geq 1$ and $\Omega=\Omega_{\ell^\infty}$ (i.e. regular cubes). As we shall see below, we use different techniques in the proofs of these two extensions, taking into consideration the geometry of the chosen sets $\Omega$.

\smallskip

For $d\geq1$ and $k\geq 0$ we denote $N_{1,d}(k)=\big|\overline{(\Omega_{\ell^1})_{k}}\big|=\big|\{\vec x\in\Z^{d}; \|\vec x\|_{1}\leq k\}\big|$.  Here is our next result.

\begin{theorem}\label{main theo cent}
Let $d\geq2$ and $f:\Z^{d}\to\R$ be a function in $\ell^{1}(\Z^{d})$. Then
\begin{equation}\label{eq main theo cent}
\var M_{1}f\leq 2d\left(1+\sum_{k\geq 1}\frac{(N_{1,d-1}(k)-N_{1,d-1}(k-1))}{N_{1,d}(k)}\right)\|f\|_{\ell^{1}(\Z^{d})}=:C(d)\|f\|_{\ell^{1}(\Z^{d})},
\end{equation}
and this constant $C(d)$ is the best possible. Moreover, the equality is attained if and only if $f$ is a delta function. 
\end{theorem}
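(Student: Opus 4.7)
First reduce to $f \geq 0$ of finite support (using $M_{1}f = M_{1}|f|$ together with a density argument). The constant $C(d)$ is verified for the extremal $f = \delta_{\vec{0}}$ by a direct computation: $M_{1}f(\vec{n}) = 1/N_{1,d}(\|\vec{n}\|_{1})$ is radial and strictly decreasing in the $\ell^{1}$-norm. Along every axis-line $\ell_{\vec{n}'} = \{(\vec{n}',t): t \in \mathbb{Z}\}$ parallel to $\vec{e}_{i}$, the restriction $t \mapsto 1/N_{1,d}(\|\vec{n}'\|_{1} + |t|)$ is unimodal with peak $1/N_{1,d}(\|\vec{n}'\|_{1})$ at $t=0$, so its 1D total variation equals $2/N_{1,d}(\|\vec{n}'\|_{1})$. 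Using $|\{\vec{n}' \in \mathbb{Z}^{d-1} : \|\vec{n}'\|_{1} = k\}| = N_{1,d-1}(k) - N_{1,d-1}(k-1)$, summing over $\vec{n}'$ and over the $d$ directions gives $\var M_{1}\delta_{\vec{0}} = C(d)$.

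For general $f \geq 0$, the key per-edge estimate exploits the $\ell^{1}$-ball geometry. Fix an edge $(\vec{n}, \vec{n}+\vec{e}_{i})$, and WLOG assume $M_{1}f(\vec{n}+\vec{e}_{i}) \geq M_{1}f(\vec{n})$; let $r^{*} = r(\vec{n}+\vec{e}_{i})$ denote the smallest optimal radius at the larger side. The triangle inequality $\|\vec{m} - \vec{n}\|_{1} \leq \|\vec{m} - \vec{n} - \vec{e}_{i}\|_{1} + 1$ gives the inclusion $\overline{(\Omega_{\ell^{1}})_{r^{*}+1}}(\vec{n}) \supseteq \overline{(\Omega_{\ell^{1}})_{r^{*}}}(\vec{n}+\vec{e}_{i})$, so
\[
M_{1}f(\vec{n}+\vec{e}_{i})\, N_{1,d}(r^{*}) \;=\; \sum_{\overline{(\Omega_{\ell^{1}})_{r^{*}}}(\vec{n}+\vec{e}_{i})} f \;\leq\; \sum_{\overline{(\Omega_{\ell^{1}})_{r^{*}+1}}(\vec{n})} f \;\leq\; M_{1}f(\vec{n})\, N_{1,d}(r^{*}+1),
\]
which rearranges to the pointwise bound
\[
M_{1}f(\vec{n}+\vec{e}_{i}) - M_{1}f(\vec{n}) \;\leq\; M_{1}f(\vec{n}+\vec{e}_{i}) \cdot \frac{N_{1,d}(r^{*}+1) - N_{1,d}(r^{*})}{N_{1,d}(r^{*}+1)},
\]
saturated by deltas.

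The \textbf{main obstacle} is the global summation. Writing $\var M_{1}f = 2\sum_{i}\sum_{\vec{n}}[M_{1}f(\vec{n}+\vec{e}_{i})-M_{1}f(\vec{n})]_{+}$ (valid since $M_{1}f \to 0$ at infinity), applying the per-edge bound, expanding $M_{1}f(\vec{n}+\vec{e}_{i})N_{1,d}(r^{*}) = \sum_{\vec{m} \in \overline{(\Omega_{\ell^{1}})_{r^{*}}}(\vec{n}+\vec{e}_{i})} f(\vec{m})$, and swapping sums yields $\var M_{1}f \leq 2\sum_{\vec{m}}f(\vec{m})\, w(\vec{m})$, where
\[
w(\vec{m}) \;=\; \sum_{i=1}^{d} \sum_{\vec{n}\in \tilde{S}_{i}\,:\,\|\vec{m}-\vec{n}\|_{1} \leq r^{*}(\vec{n})} \left(\frac{1}{N_{1,d}(r^{*}(\vec{n}))} - \frac{1}{N_{1,d}(r^{*}(\vec{n})+1)}\right),
\]
with $\tilde{S}_{i} = \{\vec{n} : M_{1}f(\vec{n}) > M_{1}f(\vec{n}-\vec{e}_{i})\}$. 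The technical heart of the proof is to show $w(\vec{m}) \leq C(d)/2$ uniformly in $\vec{m}$, with equality attained when $f$ is a delta centered at $\vec{m}$ (one checks the delta case directly using the Abel identity $A_{k} - A_{k-1} = N_{1,d-1}(k) - N_{1,d-1}(k-1)$, where $A_{k}$ counts lattice points on the rhombus slice $\|\vec{n}\|_{1}=k$ with $n_{i} \leq 0$, together with the $\pm\vec e_i$-reflection symmetry). I would establish this bound via an axis-line decomposition: for each fixed $\vec{n}' \in \mathbb{Z}^{d-1}$, the condition $(\vec{n}', n_{i}) \in \tilde{S}_{i}$ identifies the strict-ascent indices of $t \mapsto M_{1}f(\vec{n}', t)$, and the induced 1D telescoping contributes at most $1/N_{1,d}(\|\vec{m}' - \vec{n}'\|_{1})$ to the line weight---mirroring the delta's unimodal line behavior. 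Summing these line weights over $\vec{n}'$ and over $i$ then recovers precisely $C(d)/2$. The rigidity statement follows by tracing back: equality forces unimodality of $t \mapsto M_{1}f(\vec{n}', t)$ on every axis-line and tightness in the geometric inclusion, both of which happen only when $f$ is supported at a single lattice point.
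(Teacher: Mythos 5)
Your reduction and your per-edge estimate coincide with the paper's starting point: for an ascent edge you compare the optimal ball of radius $r^{*}$ at the larger endpoint with the ball of radius $r^{*}+1$ at the smaller endpoint, using $\overline{(\Omega_{\ell^1})_{r^{*}}}(\vec n+\vec e_i)\subset\overline{(\Omega_{\ell^1})_{r^{*}+1}}(\vec n)$. The gap is in the global summation. You pass to positive parts, $\var M_1 f=2\sum_i\sum_{\vec n}[M_1f(\vec n+\vec e_i)-M_1f(\vec n)]_+$, and then claim the resulting weight satisfies $w(\vec m)\le C(d)/2$ uniformly, justified by a per-line bound of $1/N_{1,d}(\|\vec m'-\vec n'\|_1)$ for the ascent edges alone. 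That per-line claim does not follow from monotone telescoping: ascent points whose optimal balls contain $\vec m$ can lie on \emph{both} sides of the hyperplane $\{n_i=m_i\}$, so each distance value can occur twice and the telescoping only closes at $1/N_{1,d}(D)+1/N_{1,d}(D+1)$. Worse, the claim itself is false already in the one-dimensional model of your estimate (where $N(r)=2r+1$ and the claimed bound is $w\le 1$): take $f=\delta_0+0.41\,\delta_6$. Then every $n\le 0$ is the larger endpoint of an ascent edge whose smallest optimal ball contains $0$ (radius $|n|$ for $|n|\le 14$, radius $|n|+6$ for $|n|\ge 15$), and $n=3$ is another such point, since $Mf(3)=1.41/7>1/5=Mf(2)$ with optimal radius $3$. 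Hence
\begin{equation*}
w(0)\;=\;\Bigl(1-\tfrac{1}{31}\Bigr)+\tfrac{1}{43}+\Bigl(\tfrac{1}{7}-\tfrac{1}{9}\Bigr)\;\approx\;1.023\;>\;1 .
\end{equation*}
The theorem is of course still true (the passage to $2\sum_{\vec m} f(\vec m)w(\vec m)$ is itself lossy), but your program of proving a uniform bound on $w$ cannot be completed, and your sketch offers no mechanism to exclude the analogous two-sided configurations on lines in $\Z^d$, $d\ge 2$.

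The paper circumvents exactly this by never doubling the ascent side: it keeps both $X_j^{+}$ and $X_j^{-}$, bounds each edge of a line once (optimal radius $r$ at the larger endpoint, $r+1$ at the other), and then, for fixed $\vec p$ and a fixed line $l$, the edges on each side of the hyperplane $\{n_j=p_j\}$ telescope by the distance of their near endpoint, giving the two-sided budget $2/N_{1,d}(d(l,\vec p))$ per line; counting $d\,(N_{1,d-1}(k)-N_{1,d-1}(k-1))$ lines at distance $k$ then yields $C(d)$, and the rigidity analysis is read off from when these telescopes are saturated. Note also that even the step you use implicitly --- that $k\mapsto 1/N_{1,d}(k)-1/N_{1,d}(k+1)$ is decreasing, so that $r^{*}\ge\|\vec m-\vec n\|_1$ can be traded for the distance --- is not free for $d\ge2$: it is equivalent to the log-concavity $N_{1,d}(k)^{2}>N_{1,d}(k+1)N_{1,d}(k-1)$, which the paper proves by a separate induction on $d$ (its Lemma \ref{fund lemma} and Corollary \ref{consec_lem}); your proposal assumes it without proof. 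Since both the main inequality and the equality characterization in your plan rest on the false uniform weight bound, the proof as proposed does not go through.
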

\noindent
{\sc{Remark:}} Note that $C(d)<\infty$, because there exists a constant $C$ such that
$$
N_{1,d}(k)=Ck^{d}+O(k^{d-1}),
$$
where $C=m(\Omega_{\ell^1})$ (see \cite[Chapter VI \S 2, Theorem 2]{Lang}). Then, for sufficiently large $k$ we have
$$ \frac{N_{1,d-1}(k)-N_{1,d-1}(k-1)}{N_{1,d}(k)}\sim \frac{1}{k^{2}}. $$ In particular, for $d=2$ we obtain $$C(2)=4+8\sum_{k\geq 1}\frac{1}{k^{2}+(k+1)^{2}}.$$

\smallskip

Our proof of Theorem \ref{main theo cent} is the natural extension of the proof of Theorem \ref{lim d=1 C=2} but we decided to present Theorem \ref{lim d=1 C=2} separately since it contains the essential idea with less technical details. The next result is the sharp version of \eqref{obj 3} for the discrete uncentered maximal operator with respect to cubes (i.e. $\ell^{\infty}$-balls). This proof follows a different strategy from Theorems \ref{lim d=1 C=2} and  \ref{main theo cent}.

\begin{theorem}\label{main theo noncent}
Let $d\geq 1$ and $f:\Z^{d}\to\R$ be a function in $\ell^{1}(\Z^{d}).$ Then
\begin{equation}\label{eq noncent d>1}
\var \wt Mf\leq 2d\left(1+\sum_{k\geq 1}\frac{1}{k}\left(\left(\frac{2}{k+1}+\frac{2k-1}{k}\right)^{d-1}-\left(\frac{2k-1}{k}\right)^{d-1}\right)\right)\|f\|_{\ell^{1}(\Z^{d})}=:\wt C(d)\|f\|_{\ell^{1}(\Z^{d})},
\end{equation}
and the constant $\wt C(d)$ is the best possible. Moreover, the equality is attained if and only if $f$ is a delta function.
\end{theorem}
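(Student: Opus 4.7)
The plan is to adapt the shift-and-swap strategy underlying Theorem \ref{lim d=1 C=2} to $d$-dimensional cubes. By the coordinate-permutation symmetry of $\Omega_{\ell^\infty}$, it suffices to prove $V_1 \leq 2(1 + S(d)) \|f\|_{\ell^1(\Z^d)}$, where $V_1 = \sum_{\vec n \in \Z^d} |\wt{M}f(\vec n + \vec e_1) - \wt{M}f(\vec n)|$ and $S(d) := \sum_{k \geq 1} \tfrac{1}{k}\bigl[\bigl(\tfrac{2}{k+1} + \tfrac{2k-1}{k}\bigr)^{d-1} - \bigl(\tfrac{2k-1}{k}\bigr)^{d-1}\bigr]$; multiplying by $d$ then yields the conclusion. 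Partition $\Z^d = V_1^+ \sqcup V_1^- \sqcup V_1^0$ by the sign of the first-direction increment of $\wt{M}f$ and handle $V_1^+$ and $V_1^-$ symmetrically.

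For each $\vec n \in V_1^+$ I would fix an optimal cube $B = \overline{\Omega}_r(\vec x_0)$ realizing $\wt{M}f(\vec n + \vec e_1)$. The strict inequality $\wt{M}f(\vec n + \vec e_1) > \wt{M}f(\vec n)$ forces $\vec n \notin B$, so $\delta := x_{0,1} - r - n_1 \in (0,1]$. The shifted cube $B' := \overline{\Omega}_r(\vec x_0 - \delta \vec e_1)$ has the same half-width as $B$ and contains $\vec n$, so $A_{B'}|f| \leq \wt{M}f(\vec n)$ and hence
$$\wt{M}f(\vec n + \vec e_1) - \wt{M}f(\vec n) \leq A_B|f| - A_{B'}|f| = \sum_{\vec p \in \Z^d} |f(\vec p)|\,\kappa^+_{\vec n}(\vec p),$$
for an explicit kernel $\kappa^+_{\vec n}$ supported on $B \cup B'$. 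Depending on whether the shift introduces a new transverse slab of lattice points (the case $\delta + \{2r\} < 1$, where $|B'|_\# = |B|_\# + m$ with $m$ the transverse lattice count of $B$) or merely translates them (the case $\delta + \{2r\} \geq 1$, where $|B'|_\# = |B|_\#$), the kernel takes one of two concrete forms; in both, discarding the non-positive contributions leaves a manageable non-negative kernel.

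Exchanging the order of summation reduces the task to showing $\sum_{\vec n \in V_1^+} \kappa^+_{\vec n}(\vec p) \leq 1 + S(d)$ for every fixed $\vec p \in \Z^d$. I would organize this sum by the first-coordinate lattice count $k$ of $B_{\vec n + \vec e_1}$, and for each $k$ enumerate admissible transverse centers $(x_{0,2}, \ldots, x_{0,d}) \in \R^{d-1}$ of cubes of the prescribed size containing $\vec p$. A binomial identity of the same type that governs the delta-function computation, where $\wt{M}\delta_{\vec 0}(\vec n) = [(k'+1)^{j'}(k')^{d-j'}]^{-1}$ with $k' = \|\vec n\|_\infty$ and $j'$ the number of coordinates of $\vec n$ attaining the maximum, collapses the transverse enumeration for fixed $k$ to $\tfrac{1}{k}\bigl[\bigl(\tfrac{2}{k+1} + \tfrac{2k-1}{k}\bigr)^{d-1} - \bigl(\tfrac{2k-1}{k}\bigr)^{d-1}\bigr]$; the "$+1$" collects the axial contribution ($n_2 = \cdots = n_d = 0$), analogous to the central row in the delta-function case. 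The symmetric argument on $V_1^-$ produces the matching bound. Sharpness and the equality characterization follow from $f = \delta_{\vec 0}$: the explicit formula for $\wt{M}\delta_{\vec 0}$ yields $\var \wt{M}\delta_{\vec 0} = \wt{C}(d)$ via the very same binomial identity, and each inequality in the upper bound is strict unless $f$ is supported at a single lattice point, forcing the equality case to be a delta function.

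The main technical obstacle will be the combinatorial enumeration in the third paragraph: one must handle both geometric sub-cases of the shift consistently, avoid double-counting over admissible $\vec n$ for a given cube shape, and verify that the transverse sum collapses to the stated binomial expression in arbitrary dimension. This is a non-trivial generalization of the telescoping identity $\sum_{k \geq 1} \tfrac{1}{k(k+1)} = 1$ that yields the constant $C = 2$ in the one-dimensional Theorem \ref{lim d=1 C=2}.
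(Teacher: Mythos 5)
There is a genuine gap at the heart of your outline: the claimed kernel bound $\sum_{\vec n \in V_1^+}\kappa^+_{\vec n}(\vec p) \le 1 + S(d)$ fails for the translate construction you propose. Because the optimal cube $B$ realizing $\wt Mf(\vec n + \vec e_1)$ is \emph{not} anchored at $\vec n + \vec e_1$, translating it by $\delta\vec e_1$ lets an entire slab of lattice points exit $B$ on the far side, and if $\vec p$ lies in that departing slab its kernel value is the full $1/|B|_\#$, not a difference of consecutive reciprocals. Already in $d=1$ with $f=\delta_0$ this breaks down: for $n\le -1$ the optimal interval for $\wt Mf(n+1)$ is $\{n+1,\dots,0\}$, your shift by $\delta=1$ gives $B'=\{n,\dots,-1\}$, so $A_Bf-A_{B'}f=\tfrac{1}{|n|}$, whereas the true increment is $\tfrac{1}{|n|}-\tfrac{1}{|n|+1}$; summing your majorants over $n$ produces the harmonic series, so no finite bound (let alone $1+S(1)=1$) can come out of this kernel. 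The telescoping that gives $C=2$ in Theorem \ref{lim d=1 C=2} (and Theorem \ref{main theo cent}) depends crucially on the ball being centered at the evaluation point, so that one compares $A_{r}f(n)$ with $A_{r+1}f(n+1)$ over nested balls and every point contributes at most $\tfrac{1}{N(r)}-\tfrac{1}{N(r+1)}$; a pure translation of an uncentered cube destroys the nesting and, with it, the summability. This loss is not a technicality to be absorbed by the ``combinatorial enumeration'' you flag as the main obstacle: it occurs precisely for the extremal delta function.

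This is exactly why the paper treats the uncentered case by a different mechanism. It works line by line with strings of local maxima and minima: along a line $l$ the variation equals $2\sum_j\big(\wt Mf(n',a_j^+)-\wt Mf(n',b_j^-)\big)$, the value at a local minimum is bounded below by the average over the \emph{enlarged} cube $c_{(n',a_j^+)}+Q\big(r_{(n',a_j^+)}+|a_j^+-b_j^-|\big)$ as in \eqref{sum min max} (enlargement rather than translation, so no lattice points are lost), and Lemma \ref{a lo mas un} ensures each $f(\vec p)$ contributes positively to at most one string of maxima per line, which yields the per-line bound \eqref{dist line}, namely $\tfrac{2}{(d(l,\vec p)+1)^{j}\,d(l,\vec p)_+^{d-j}}$. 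Counting the $2^{j}\binom{d-1}{j}(2k-1)^{d-1-j}$ lines at distance $k$ with multiplicity $j$ and applying the binomial theorem then gives $\wt C(d)$. Your formula for $\wt M\delta_{\vec 0}$ and the resulting sharpness computation are correct, but without an ingredient playing the role of the maxima/minima decomposition and Lemma \ref{a lo mas un} (or some other device restoring difference-type, summable kernels for the uncentered operator), the upper bound in \eqref{eq noncent d>1} does not follow from your argument.
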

\noindent {\sc{Remark:}} In particular $\wt C(1)= 2$ (and we recover \eqref{obj 0}) and $\wt C(2)=12$.

\smallskip

For the proofs of these three theorems we may assume throughout the rest of the paper, without loss of generality, that $f\geq0$.

\section{Proof of Theorem \ref{lim d=1 C=2} }
Since $f\in \ell^{1}(\Z)$, we have that for all $n\in\Z$ there exists $r_{n}\in\Z$ such that $Mf(n)=A_{r_{n}}f(n)$. We define
$$
X^{-}=\{n\in\Z; Mf(n)\geq Mf(n+1)\}\ \  \text{and}\ \ X^{+}=\{n\in\Z; Mf(n+1)>Mf(n)\}.
$$
\noindent
Then we have
\begin{eqnarray}\label{suma}
\var Mf&=&\sum_{n\in\Z}|Mf(n)-Mf(n+1)|\nonumber\\
&=&\sum_{n\in X^{-}}Mf(n)-Mf(n+1)+\sum_{n\in X^{+}}Mf(n+1)-Mf(n)\nonumber\\
&\leq&\sum_{n\in X^{-}}A_{r_{n}}f(n)-A_{r_{n}+1}f(n+1)+\sum_{n\in X^{+}}A_{r_{n+1}}f(n+1)-A_{r_{n+1}+1}f(n). 
\end{eqnarray}
Given $p\in\Z$ fixed,  we want to evaluate the
maximal contribution of $f(p)$ to the right-hand side of \eqref{suma}.

\smallskip 

\noindent{\it Case 1:} If $n\in X^{-}$ and $n\geq p$. In this situation we have that the contribution of $f(p)$ to $A_{r_{n}}f(n)-A_{r_{n}+1}f(n+1)$ is $0$ (if $p<n-r_{n}$) or $\frac{1}{2r_{n}+1}-\frac{1}{2r_{n}+3}$ (if $n-r_{n}\leq p$). In the second case we have 
\begin{eqnarray*}
\frac{1}{2r_{n}+1}-\frac{1}{2r_{n}+3}&=&\frac{2}{(2r_{n}+1)(2r_{n}+3)}\\
&\leq& \frac{2}{(2(n-p)+1)(2(n-p)+3)}\\
=&=&\frac{1}{2(n-p)+1}-\frac{1}{2(n-p)+3}.
\end{eqnarray*}
The equality is attained if and only if $r_{n}=n-p$.

\smallskip

\noindent {\it Case 2:} If $n\in X^{+}$ and $n\geq p$. Now we have that the contribution of $f(p)$ to $A_{r_{n+1}}f(n+1)-A_{r_{n+1}+1}f(n)$ is non-positive (if $p<n+1-r_{n+1}$) or $\frac{1}{2r_{n+1}+1}-\frac{1}{2r_{n+1}+3}$ (if $n+1-r_{n+1}\leq p$). In the second case we have 
\begin{eqnarray*}
\frac{1}{2r_{n+1}+1}-\frac{1}{2r_{n+1}+3}&=&\frac{2}{(2r_{n+1}+1)(2r_{n+1}+3)}\\
&\leq& \frac{2}{(2(n+1-p)+1)(2(n+1-p)+3)}\\
&=&\frac{1}{2(n+1-p)+1}-\frac{1}{2(n+1-p)+3}\\
&<&\frac{1}{2(n-p)+1}-\frac{1}{2(n-p)+3}.
\end{eqnarray*}

\smallskip

\noindent {\it Case 3:} If $n\in X^{-}$ and $n<p$. In this situation we have that the contribution of $f(p)$ to $A_{r_{n}}f(n)-A_{r_{n}+1}f(n+1)$ is non-positive (if $p>n+r_{n}$) or $\frac{1}{2r_{n}+1}-\frac{1}{2r_{n}+3}$ (if $n+r_{n}\geq p$). In the second case we have
\begin{eqnarray*}
\frac{1}{2r_{n}+1}-\frac{1}{2r_{n}+3}&=&\frac{2}{(2r_{n}+1)(2r_{n}+3)}\\
&\leq& \frac{2}{(2(p-n)+1)(2(p-n)+3)}\\
&=&\frac{1}{2(p-n)+1}-\frac{1}{2(p-n)+3}\\
&<&\frac{1}{2(p-n-1)+1}-\frac{1}{2(p-n-1)+3}.
\end{eqnarray*}

\smallskip

\noindent {\it Case 4:} If $n\in X^{+}$ and $n<p$. Now we have that the contribution of $f(p)$ to $A_{r_{n+1}}f(n+1)-A_{r_{n+1}+1}f(n)$ is either $0$ (if $p>n+1+r_{n+1}$) or $\frac{1}{2r_{n+1}+1}-\frac{1}{2r_{n+1}+3}$ (if $n+1+r_{n+1}\geq p$). In the second case we have 
\begin{eqnarray*}
\frac{1}{2r_{n+1}+1}-\frac{1}{2r_{n+1}+3}&=&\frac{2}{(2r_{n+1}+1)(2r_{n+1}+3)}\\
&\leq& \frac{2}{(2(p-n-1)+1)(2(p-n-1)+3)}\\
&=&\frac{1}{2(p-n-1)+1}-\frac{1}{2(p-n-1)+3}.
\end{eqnarray*} 
The equality is achieved if and only if $r_{n+1}=p-n-1$.

\smallskip

\noindent {\it Conclusion:} Therefore the contribution of $f(p)$ to the right-hand side of \eqref{suma} is bounded by
$$
\sum_{n\geq p}\frac{1}{2(n-p)+1}-\frac{1}{2(n-p)+3}+\sum_{n<p}\frac{1}{2(p-n-1)+1}-\frac{1}{2(p-n-1)+3}=2.
$$
As $p$ is an arbitrary point in $\Z$, this establishes \eqref{main theo cent d=1}. If $f$ is a delta function we can easily see that
$$
\var Mf =2\|f\|_{\ell^{1}(\Z)}.
$$
On the other hand, given a function $f:\Z\to\R$ such that $\var Mf=2\|f\|_{\ell^{1}(\Z)}$ and $f\geq0$, let us define $P=\{t\in\Z; f(t)\neq 0\}$. Then 
$$
\var Mf=2\sum_{t\in P}f(t),
$$
and, given $t_{1}\in P$, the contribution of $f(t_{1})$ to \eqref{suma} is 2. Therefore, by the previous analysis we note that for all $n\geq t_{1}$ we must have that $n\in X^{-}$ and $r_{n}=n-t_{1}$. If we take $t_{2}\in P$ the same should happen, which implies that $t_{1}=t_{2}$ and therefore $P=\{t_{1}\}$. This proves that $f$ is a delta function and the proof is concluded.


\section{Proof of Theorem \ref{main theo cent}}

\subsection{Preliminaries} Since $f\in \ell^{1}(\Z^{d})$, we have that there exists $r_{\vec n}\in\Z$ such that $M_{1}f(\vec n)=A_{r_{\vec n}}f(\vec n)$. For all $\vec m=(m_{1},m_{2},\dots,m_{d})\in\Z^{d}$ we define
$$
|\vec m|_{1}=\sum_{i=1}^{d}|m_{i}|,
$$
and for $1\leq j\leq d$, we define
$$
I_{j}=\{l\subset\Z^{d};\ l\  \text{is a line parallel to the vector}\   \vec e_{j} \},
$$
$$
X_{j}^{-}=\{\vec n\in\Z^{d}; M_{1}f(\vec n)\geq M_{1}f(\vec n+\vec e_{j})\}\ \  \text{and}\ \ X_{j}^{+}=\{\vec n\in\Z^{d}; M_{1}f(\vec n+\vec e_{j})>M_{1}f(\vec n)\}.
$$
We then have
\begin{eqnarray}\label{suma d>1}
\var M_{1}f&=&\sum_{\vec n\in\Z^{d}}\sum_{j=1}^{d}|M_{1}f(\vec n)-M_{1}f(\vec n+\vec e_{j})|\nonumber\\
&=&\sum_{j=1}^{d}\sum_{l\in I_{j}}\sum_{\vec n\in l\cap X_{j}^{-}}M_{1}f(\vec n)-M_{1}f(\vec n+\vec e_{j})+\sum_{j=1}^{d}\sum_{l\in I_{j}}\sum_{\vec n\in l\cap X_{j}^{+}}M_{1}f(\vec n+\vec e_{j})-M_{1}f(\vec n)\nonumber\\
&\leq&\sum_{j=1}^{d}\sum_{l\in I_{j}}\sum_{\vec n\in l\cap X_{j}^{-}}A_{r_{\vec n}}f(\vec n)-A_{r_{\vec n}+1}f(\vec n+\vec e_{j})\\
&&\ \ \  \ +\sum_{j=1}^{d}\sum_{l\in I_{j}}\sum_{\vec n\in l\cap X_{j}^{+}}A_{r_{\vec n+\vec e_{j}}}f(\vec n+\vec e_{j})-A_{r_{\vec n+\vec e_{j}}+1}f(\vec n)\nonumber.  
\end{eqnarray}

Fixed a point $\vec p=(p_{1},p_{2},\dots,p_{d})\in\Z^{d}$,  we want to evaluate the maximal contribution of $f(\vec p)$ to the right-hand side of \eqref{suma d>1}.

\subsection{Auxiliary results} We now prove the following lemma of arithmetic character, which will be particularly useful in the rest of the proof.

\begin{lemma}\label{fund lemma} If $d\geq1$, then

\begin{equation}\label{Ineq_Fund_Lem}
N_{1,d}(k)^{2}> {N_{1,d}(k+1)}N_{1,d}(k-1) \ \ \forall \ k\geq1.
\end{equation}
\end{lemma}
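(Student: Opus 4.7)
The plan is to prove \eqref{Ineq_Fund_Lem} by induction on the dimension $d$. The base case $d=1$ is a trivial verification, as $N_{1,1}(k)=2k+1$ and
$$
(2k+1)^{2}-(2k-1)(2k+3)=4>0.
$$
The engine of the inductive step is the decomposition
$$
N_{1,d}(k)=T_{k}+T_{k-1},\qquad T_{j}:=\sum_{i=0}^{j}N_{1,d-1}(i),\qquad T_{-1}:=0,
$$
which is obtained by splitting the $\ell^{1}$-ball of radius $k$ in $\Z^{d}$ according to the value of the last coordinate $m\in\{-k,\ldots,k\}$, giving $N_{1,d}(k)=N_{1,d-1}(k)+2\sum_{j=0}^{k-1}N_{1,d-1}(j)=T_{k}+T_{k-1}$.

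With the inductive hypothesis that $\{N_{1,d-1}(k)\}$ is strictly log-concave, the step then splits into two generic preservation lemmas for positive log-concave sequences.

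First, I would show that if $\{a_{j}\}_{j\ge 0}$ is log-concave with $a_{0}>0$, its partial sums $T_{k}$ are strictly log-concave. This rests on the telescoping identity
$$
T_{k}^{2}-T_{k-1}T_{k+1}=T_{k}(T_{k-1}+a_{k})-T_{k-1}(T_{k}+a_{k+1})=T_{k}a_{k}-T_{k-1}a_{k+1},
$$
so it suffices to prove $T_{k}/T_{k-1}>a_{k+1}/a_{k}$. Since log-concavity of $\{a_{j}\}$ makes $a_{j+1}/a_{j}$ non-increasing in $j$, comparing term by term in $T_{k}-a_{0}=\sum_{j=0}^{k-1}a_{j+1}$ yields $T_{k}-a_{0}\ge(a_{k+1}/a_{k})T_{k-1}$, and the strict inequality then follows from $a_{0}>0$.

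Second, I would show that if $\{T_{k}\}_{k\ge 0}$ is positive and strictly log-concave (with the convention $T_{-1}=0$), then so is $\{U_{k}\}=\{T_{k}+T_{k-1}\}$. Expanding, one has
$$
U_{k}^{2}-U_{k-1}U_{k+1}=(T_{k}^{2}-T_{k-1}T_{k+1})+(T_{k-1}^{2}-T_{k-2}T_{k})+(T_{k}T_{k-1}-T_{k-2}T_{k+1}).
$$
The first two summands are strictly positive by the log-concavity of $\{T_{k}\}$, and the third is non-negative because multiplying the log-concavity inequalities $T_{k}/T_{k-1}\ge T_{k+1}/T_{k}$ and $T_{k-1}/T_{k-2}\ge T_{k}/T_{k-1}$ yields $T_{k}/T_{k-2}\ge T_{k+1}/T_{k-1}$. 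The edge case $k=1$, where $T_{k-2}=0$, reduces the identity to a strictly positive expression as well. Applying these two lemmas successively to the sequence $\{N_{1,d-1}(j)\}_{j\ge 0}$ gives the strict log-concavity of $\{N_{1,d}(k)\}$, closing the induction.

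I do not expect any significant obstacle beyond bookkeeping of the boundary case $k=1$ and distinguishing strict from non-strict inequalities; the real content is spotting the clean identity $N_{1,d}(k)=T_{k}+T_{k-1}$, which reduces the geometric problem to two short, purely algebraic transfer lemmas about log-concave sequences.
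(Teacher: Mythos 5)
Your proposal is correct, and it rests on the same skeleton as the paper's proof: induction on the dimension combined with the slicing recurrence $N_{1,d}(k)=N_{1,d-1}(k)+2\sum_{j=0}^{k-1}N_{1,d-1}(j)$, which is exactly your identity $N_{1,d}(k)=T_{k}+T_{k-1}$. The only real difference is how the inductive step is organized: the paper argues directly with ratio (mediant) inequalities, deducing $f(k+1)/f(k)>\bigl(f(k+2)-f(k+1)\bigr)/\bigl(f(k+1)-f(k)\bigr)$ for $f=N_{1,d+1}$ from the strict monotonicity of the ratios $N_{1,d}(k+1)/N_{1,d}(k)$, whereas you modularize the same algebra into two reusable transfer lemmas (partial sums of a positive log-concave sequence are strictly log-concave, and so is $T_{k}+T_{k-1}$), both of which you verify correctly, including the $k=1$ boundary case.
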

\begin{proof}
We prove this via induction. For $d=1$ we have that $N_{1,1}(k)=2k+1$, therefore
$$
N_{1,1}(k)^{2}=4k^{2}+4k+1>(2k+3)(2k-1)=N_{1,1}(k+1)N_{1,1}(k-1).
$$ 
Since $N_{1,d}(k)=\big|\{(x_{1},\dots,x_{d})\in\Z^{d};|x_{1}|+
 \dots +|x_{d}|\leq k\}\big|$, fixing the value of the last variable, we can verify that
\begin{equation}\label{recurrencia larga}
N_{1,d}(k)=N_{1,d-1}(k)+2\sum_{j=0}^{k-1}N_{1,d-1}(j).
\end{equation}

Now, let us assume that the result is true for $d$, i.e.
\begin{equation}\label{hip}
N_{1,d}(k)^{2}> {N_{1,d}(k+1)}N_{1,d}(k-1) \ \ \forall \ k\geq1.
\end{equation}
We want to prove that this implies that the result is also true for $d+1$. For simplicity we denote $g(k):=N_{1,d}(k)\ \ \text{and}\ \ f(k):=N_{1,d+1}(k) \ \ \text{for all}\ k\geq 0$. Thus by \eqref{hip} we have that
\begin{equation}\label{consec_hip}
\frac{g(1)}{g(0)}> \frac{g(2)}{g(1)}>\dots>\frac{g(k)}{g(k-1)}>\frac{g(k+1)}{g(k)}>\dots 
\end{equation}
and by \eqref{recurrencia larga} we have that
$$
f(k)=g(k)+2\sum_{j=0}^{k-1}g(j) \ \  \ \  \forall\ k\geq 0.
$$
The latter implies that
$$
f(k+1)-f(k)=g(k+1)+g(k)\  \ \  \  \forall\ k\geq 0. 
$$
Therefore, by \eqref{consec_hip}, we obtain that
$$
\frac{g(k+1)}{g(k)}> \frac{g(k+2)+g(k+1)}{g(k+1)+g(k)}
$$
and
$$
\frac{g(k+1)+2\sum_{j=1}^{k}g(j)}{g(k)+2\sum_{j=1}^{k}g(j-1)}>
\frac{g(k+1)}{g(k)}.
$$
Combining these inequalities we arrive at
$$
\frac{f(k+1)}{f(k)}\geq \frac{g(k+1)+2\sum_{j=1}^{k}g(j)}{g(k)+2\sum_{j=1}^{k}g(j-1)} >
\frac{g(k+1)}{g(k)}> \frac{g(k+2)+g(k+1)}{g(k+1)+g(k)}=\frac{f(k+2)-f(k+1)}{f(k+1)-f(k)},
$$
and hence
$$
\frac{f(k+1)-f(k)}{f(k)}> \frac{f(k+2)-f(k+1)}{f(k+1)}.
$$
This implies that
$$
\frac{f(k+1)}{f(k)}> \frac{f(k+2)}{f(k+1)}\ \ \forall \ k\geq 0,
$$
which establishes the desired result.
\end{proof}

\begin{corollary}\label{consec_lem}
If $d\geq1$, we have that
\begin{equation}\label{prop theo 2}
\frac{1}{N_{1,d}(k)}-\frac{1}{N_{1,d}(k+1)}> \frac{1}{N_{1,d}(k+1)}-\frac{1}{N_{1,d}(k+2)} \ \ \forall \ k\geq0.
\end{equation}
\end{corollary}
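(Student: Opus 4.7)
The plan is to reduce the inequality algebraically and then invoke Lemma \ref{fund lemma} together with the AM--GM inequality. Setting $a_k := N_{1,d}(k)$ for brevity, I would first clear denominators in the target inequality: multiplying by $a_k\,a_{k+1}\,a_{k+2} > 0$, the claim becomes
$$a_{k+2}(a_{k+1} - a_k) > a_k(a_{k+2} - a_{k+1}),$$
which rearranges to $a_{k+1}(a_k + a_{k+2}) > 2\,a_k\,a_{k+2}$, and, after dividing through by $a_k\,a_{k+2}$, to the equivalent form
$$\frac{a_{k+1}}{a_k} + \frac{a_{k+1}}{a_{k+2}} > 2.$$

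Next, I would apply Lemma \ref{fund lemma} at index $k+1 \geq 1$, which gives $a_{k+1}^2 > a_k\,a_{k+2}$. The product of the two terms on the left-hand side above is therefore $a_{k+1}^2/(a_k\,a_{k+2}) > 1$, so AM--GM yields
$$\frac{a_{k+1}}{a_k} + \frac{a_{k+1}}{a_{k+2}} \geq 2\sqrt{\frac{a_{k+1}^2}{a_k\,a_{k+2}}} > 2,$$
which is the desired inequality.

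There is no serious obstacle here: the substantive content of the corollary lies entirely in Lemma \ref{fund lemma} (the log-concavity of the level counts $N_{1,d}(k)$), and the corollary is then a formal consequence via AM--GM. The only bookkeeping point is to verify that the index shift matches the range of validity of the lemma, which is immediate: the corollary is stated for $k \geq 0$, and the lemma is invoked at $k+1 \geq 1$, precisely its range of applicability.
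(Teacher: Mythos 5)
Your proof is correct and follows essentially the same route as the paper: both reduce \eqref{prop theo 2} to $\frac{N_{1,d}(k+1)}{N_{1,d}(k)}+\frac{N_{1,d}(k+1)}{N_{1,d}(k+2)}>2$ and deduce this from Lemma \ref{fund lemma} at index $k+1$ together with the AM--GM inequality. The only cosmetic difference is that you apply AM--GM first and then the lemma under the square root, while the paper first replaces one ratio using the lemma and then applies AM--GM to a term plus its reciprocal.
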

\begin{proof}
We notice that \eqref{prop theo 2} is equivalent to
$$
\frac{N_{1,d}(k+1)}{N_{1,d}(k)}+\frac{N_{1,d}(k+1)}{N_{1,d}(k+2)}>2.
$$
This follows from Lemma \ref{fund lemma} and the arithmetic mean - geometric mean inequality because
$$
\frac{N_{1,d}(k+1)}{N_{1,d}(k)}+\frac{N_{1,d}(k+1)}{N_{1,d}(k+2)}> \frac{N_{1,d}(k+2)}{N_{1,d}(k+1)}+\frac{N_{1,d}(k+1)}{N_{1,d}(k+2)}\geq 2.
$$
\end{proof}

\subsection{Proof of Theorem \ref{main theo cent}} Let us simplify notation by writing $N_{1}(k):=N_{1,d}(k)$. Given $1\leq j\leq d$, using Corollary \ref{consec_lem} we make the following observations.

\smallskip 

\noindent{\it Case 1:} If $\vec n\in X_{j}^{-}$ and $n_{j}\geq p_{j}$. In this situation we have that the contribution of $f(\vec p)$ to $A_{r_{\vec n}}f(\vec n)-A_{r_{\vec n}+1}f(\vec n+\vec e_{j})$ is non-positive (if $|\vec n-\vec p|_{1}>r_{\vec n}$) or $\frac{1}{N_{1}(r_{\vec n})}-\frac{1}{N_{1}(r_{\vec n}+1)}$ (if $|\vec n-\vec p|_{1}\leq r_{\vec n}$). In the second case we have 
\begin{eqnarray*}
\frac{1}{N_{1}(r_{\vec n})}-\frac{1}{N_{1}(r_{\vec n}+1)}
&\leq&\frac{1}{N_{1}(|\vec n-\vec p|_{1})}-\frac{1}{N_{1}(|\vec n-\vec p|_{1}+1)}\\
&=&\frac{1}{N_{1}(|\vec n-\vec p|_{1})}-\frac{1}{N(|\vec n+\vec e_{j}-\vec p|_{1})}.
\end{eqnarray*}
The equality is attained if and only if $r_{\vec n}=|\vec n-\vec p|_{1}$.

\smallskip 

\noindent{\it Case 2:} If $\vec n\in X_{j}^{+}$ and $n_{j}\geq p_{j}$. Now we have that the contribution of $f(\vec p)$ to $A_{r_{\vec n+\vec e_{j}}}f(\vec n+\vec e_{j})-A_{r_{\vec n+\vec e_{j}}+1}f(\vec n)$ is non-positive (if $|\vec n+\vec e_{j}-\vec p|_{1}>r_{\vec n+\vec e_{j}}$) or $\frac{1}{N_{1}(r_{\vec n+\vec e_{j}})}-\frac{1}{N_{1}(r_{\vec n+\vec e_{j}}+1)}$ (if $|\vec n+\vec e_{j}-\vec p|_{1}\leq r_{\vec n+\vec e_{j}}$). In the second case we have 
\begin{eqnarray*}
\frac{1}{N_{1}(r_{\vec n+\vec e_{j}})}-\frac{1}{N_{1}(r_{\vec n+\vec e_{j}}+1)}&\leq&\frac{1}{N_{1}(|\vec n+\vec e_{j}-\vec p|_{1})}-\frac{1}{N_{1}(|\vec n+\vec e_{j}-\vec p|_{1}+1)}\\
&=&\frac{1}{N_{1}(|\vec n-\vec p|_{1}+1)}-\frac{1}{N_{1}(|\vec n-\vec p|_{1}+2)}\\
&<&\frac{1}{N_{1}(|\vec n-\vec p|_{1})}-\frac{1}{N_{1}(|\vec n-\vec p|_{1}+1)}\\
&=&\frac{1}{N_{1}(|\vec n-\vec p|_{1})}-\frac{1}{N(|\vec n+\vec e_{j}-\vec p|_{1})}.
\end{eqnarray*}

\smallskip 

\noindent{\it Case 3:} If $\vec n\in X_{j}^{-}$ and $n_{j}< p_{j}$. In this situation we have that the contribution of $f(\vec p)$ to $A_{r_{\vec n}}f(\vec n)-A_{r_{\vec n}+1}f(\vec n+\vec e_{j})$ is non-positive (if $|\vec n-\vec p|_{1}>r_{\vec n}$) or $\frac{1}{N_{1}(r_{\vec n})}-\frac{1}{N_{1}(r_{\vec n}+1)}$ (if $|\vec n-\vec p|_{1}\leq r_{\vec n}$). In the second case we have 
\begin{eqnarray*}
\frac{1}{N_{1}(r_{\vec n})}-\frac{1}{N_{1}(r_{\vec n}+1)}
&\leq&\frac{1}{N_{1}(|\vec p-\vec n|_{1})}-\frac{1}{N_{1}(|\vec p-\vec n|_{1}+1)}\\
&<&\frac{1}{N_{1}(|\vec p-\vec n-\vec e_{j}|_{1})}-\frac{1}{N_{1}(|\vec p-\vec n|_{1})}.
\end{eqnarray*}

\smallskip 

\noindent{\it Case 4:} If  $\vec n\in X_{j}^{+}$ and $n_{j}< p_{j}$. Now we have that the contribution of $f(\vec p)$ to $A_{r_{\vec n+\vec e_{j}}}f(\vec n+\vec e_{j})-A_{r_{\vec n+\vec e_{j}}+1}f(\vec n)$ is non-positive (if $|\vec p-\vec n-\vec e_{j}|_{1}>r_{\vec n+\vec e_{j}}$) or $\frac{1}{N_{1}(r_{\vec n+\vec e_{j}})}-\frac{1}{N_{1}(r_{\vec n+\vec e_{j}}+1)}$ (if $|\vec p-\vec n-\vec e_{j}|_{1}\leq r_{\vec n+\vec e_{j}}$). In the second case we have 
\begin{eqnarray*}
\frac{1}{N_{1}(r_{\vec n+\vec e_{j}})}-\frac{1}{N_{1}(r_{\vec n+\vec e_{j}}+1)}&\leq&\frac{1}{N_{1}(|\vec p-\vec n-\vec e_{j}|_{1})}-\frac{1}{N_{1}(\vec p-\vec n-\vec e_{j}|_{1}+1)}\\
&=&\frac{1}{N_{1}(|\vec p-\vec n-\vec e_{j}|_{1})}-\frac{1}{N_{1}(|\vec p-\vec n|_{1})}.
\end{eqnarray*}
The equality is achieved if and only if $r_{\vec n+\vec e_{j}}=|\vec p-\vec n-\vec e_{j}|_{1}$.

\smallskip 

\noindent{\it Conclusion:} Given a line $l$ in the lattice, we define the distance from $\vec p$ to $l$ by 
$$
d(l,\vec p)=\min\{|\vec m-\vec p|_{1};\,\vec m\in l\}.
$$
If the direction of $l$ is the same as the direction of $\vec e_{j}$, by intersecting $l$ with the hyperplane $H_{j}=\{\vec z\in\Z^{d}; z_{j}=p_{j}\}$ we obtain the point that realizes the distance from $p$ to $l$. By the previous analysis  we have that the contribution of $f(\vec p)$ to 
$$
\sum_{\vec n\in l\cap X_{j}^{-}}A_{r_{\vec n}}f(\vec n)-A_{r_{\vec n}+1}f(\vec n+\vec e_{j})+\sum_{\vec n\in l\cap X_{j}^{+}}A_{r_{\vec n+\vec e_{j}}}f(\vec n+\vec e_{j})-A_{r_{\vec n+\vec e_{j}}+1}f(\vec n)
$$
is less than or equal to
\begin{equation}\label{cent peso maximo}
\frac{2}{N_{1,d}(d(l,\vec p))}.
\end{equation}
As $p$ belongs to $d$ lines of the lattice, given $k\in \N$ there exist $d(N_{1,d-1}(k)-N_{1,d-1}(k-1))$ lines such that $d(l,\vec p)=k$. Thus the contribution of $f(\vec p)$ to the right-hand side of \eqref{suma d>1} is less than or equal to
$$
\left(2d+\sum_{k\geq 1}\frac{2d(N_{1,d-1}(k)-N_{1,d-1}(k-1))}{N_{1,d}(k)}\right),
$$
and as a consequence of this we obtain the desired inequality. 

\smallskip

If $f$ is a delta function, then there exist $\vec y\in\Z^{d}$ and $k\in\R$ such that 
$$
f(\vec y)=k \ \ \text{and}\ \ \ f(\vec x)=0\ \ \forall \ \vec x\in\Z^{d}\setminus\{y\}.
$$
Considering the contribution of $|f(\vec y)|$ to a line $l$ in the lattice $\Z^{d}$ we have equality in \eqref{cent peso maximo}, and hence in \eqref{eq main theo cent}. On the other hand, let us assume that $f:\Z^{d}\to\R$ is a nonnegative function that verifies the equality in \eqref{eq main theo cent}. We define $P=\{\vec t\in\Z^{d}; f(\vec t)\neq 0\}$ and then
$$
\var M_{1}f=\left(2d+\sum_{k\geq 1}\frac{2d(N_{1,d-1}(k)-N_{1,d-1}(k-1))}{N_{1,d}(k)}\right)\sum_{\vec t\in P}f(\vec t).
$$
Therefore, given $\vec s=(s_{1},s_{2},\dots,s_{d})\in P$ and a line $l$ in the lattice, the contribution of $f(\vec s)$ to $l$ in \eqref{cent peso maximo} must be $\frac{2}{N_{1,d}(d(l,\vec s))}$ by the previous analysis. Then, if there exists $\vec u\in P\setminus\{\vec s\}$, the contribution of $f(\vec u)$ to $l$ in \eqref{suma d>1} must also be $
\frac{2}{N_{1,d}(d(l,\vec u))}$. Assume without loss of generality that $s_{d}>u_{d}$ and consider the line  $l=\{(s_{1},s_{2},\dots,s_{n-1},x); x\in\Z\}$. As we have equality in \eqref{eq main theo cent}, given $\vec n\in l$ such that $n_{d}\geq s_{d}$, we need to have that $\vec n\in X^{-}_{j}$ and 
$|\vec n-\vec s|_{1}=r_{\vec n}=|\vec n-\vec u|_{1}$, which gives us a contradiction. Thus $f$ must be a delta function.

\section{Proof of Theorem \ref{main theo noncent}} 

\subsection{Preliminaries} As before we start noticing that, since $f\in \ell^{1}(\Z^{d})$, for each $\vec n \in \Z^d$ there exist $r_{\vec n}\in\R^{+}$ and $c_{\vec n}\in\R^{d}$ such that $\vec n\in c_{\vec n}+Q(r_{\vec n})$ and $\wt Mf(\vec n)=A_{r_{\vec n}}f( c_{\vec n})$, where $Q_{r_{\vec n}}=\{m\in\Z^{d}; |m|_{\infty}\leq r_{\vec n}\}=\{m\in\Z^{d},\max\{|m_{1}|,\ldots,|m_{d}|\}\leq r_{\vec n}\}$. 
We now introduce the local maxima and minima of a discrete function $g:\Z \to \R$.\footnote{The local extrema are defined slightly differently in  \cite{BCHP, CH}, but used with the meaning stated here.} We say that an interval $[n,m]$ is a {{\it string of local maxima}} of $g$ if 
$$g(n-1) < g(n) = \ldots = g(m) > g(m+1).$$
If $n = -\infty$ or $m = \infty$ (but not both simultaneously) we modify the definition accordingly, eliminating one of the inequalities. The rightmost point $m$ of such a string is a {\it right local maximum} of $g$, while the leftmost point $n$ is a {\it left local maximum} of $g$. We define {\it string of local minima}, {\it right local minimum} and {\it left local minimum} analogously. 

\smallskip
Given a line $l$ in the lattice $\Z^{d}$ parallel to $\vec e_{d}$ there exists $n'\in\Z^{d-1}$ such that $l=\{(n',m); m\in\Z\}$. Let us assume that $\wt{M}f(n',x)$ is not constant as function of $x$ (otherwise the variation of the maximal function over this line will be zero). Let $\{[a_j^-,a_j^+]\}_{j \in \Z}$ and $\{[b_j^-,b_j^+]\}_{j \in \Z}$ be the ordered strings of local maxima and local minima of $\wt{M}f(n',x)$ (we allow the possibilities of $a_j^{-}$ or $b_j^{-} = - \infty$ and $a_j^{+}$ or $b_j^{+} =  \infty$), i.e. 
\begin{equation}\label{Sec4_sequence}
\ldots < a_{-1}^- \leq a_{-1}^+ < b_{-1}^- \leq b_{-1}^+ < a_0^- \leq a_0^+ < b_0^-\leq b_0^+ < a_1^- \leq a_1^+ < b_1^- \leq b_1^+ < \ldots
\end{equation}
This sequence may terminate in one or both sides and we adjust the notation and the proof below accordingly. Note that we have at least one string of local maxima since $\wt{M}f(\vec n) \to 0$ as $|\vec n|_{\infty} \to \infty$, therefore, if the sequence terminates in one or both sides, it must terminate in a string of local maxima. The variation of the maximal function in $l$ is given by
\begin{equation}\label{sum min max}
2\sum_{j\in\Z}\wt Mf(n',a^{+}_{j})-\wt Mf(n',b^{-}_{j})\leq 2\sum_{j\in\Z}A_{r_{(n',a_{j}^+)}}f( c_{(n',a_{j}^+)})-A_{r_{(n',a_{j}^+)}+|a^{+}_{j}-b^{-}_{j}|}f( c_{(n',a_{j}^+)}).
\end{equation}

\smallskip

We now prove an auxiliary lemma. 
\begin{lemma}\label{a lo mas un}
Given $\vec q\in\Z^{d}$ and a line $l$ in the lattice $\Z^{d}$. There exists at most one string of local maxima of $\wt Mf$ in $l$ such that there exists $\vec n$ in the string whose contribution of $f(\vec q)$ to $A_{r_{\vec n}}f(c_{\vec n})$ is positive.  
\end{lemma}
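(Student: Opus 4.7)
My approach is by contradiction. Suppose that two distinct strings of local maxima of $\wt Mf$ along $l$ both contain a point whose optimizing cube includes $\vec q$, and denote these points by $\vec n_i, \vec n_j$ with associated cubes $C_i = c_{\vec n_i}+Q(r_{\vec n_i})$ and $C_j = c_{\vec n_j}+Q(r_{\vec n_j})$. I may assume $l$ is parallel to $\vec e_d$, so I can write $\vec n_i = (n',m_i)$ and $\vec n_j = (n',m_j)$ with $m_i < m_j$. By \eqref{Sec4_sequence} there is at least one string of local minima $[b^-,b^+]$ of $\wt Mf$ along $l$ lying strictly between, i.e.\ $m_i \leq a_i^+ < b^- \leq b^+ < a_j^- \leq m_j$ (finiteness of $b^\pm$ is automatic since $m_i, m_j$ are finite).

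The key geometric input is the coordinate-wise decoupling specific to $\ell^\infty$-cubes: a point $(n',m)$ lies in $C_i$ if and only if $|n' - (c_{\vec n_i})'|_\infty \leq r_{\vec n_i}$ and $|m - (c_{\vec n_i})_d| \leq r_{\vec n_i}$; the first inequality is automatic since $\vec n_i \in C_i$. Hence $I_i := \{m \in \Z : (n',m) \in C_i\}$ is an integer interval, and it contains $m_i$ (from $\vec n_i \in C_i$) as well as $q_d$ (from $\vec q \in C_i$); in particular it contains the full integer segment joining $m_i$ and $q_d$. The same argument gives the analogous property for $I_j$. A short case split on whether $q_d < m_i$, $q_d \in [m_i,m_j]$, or $q_d > m_j$ then yields $I_i \cup I_j \supseteq [m_i,m_j] \cap \Z$ in every case.

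Consequently, for any $b \in [b^-, b^+]$ the point $(n',b)$ lies in $C_i$ or $C_j$, and using that cube as a competitor in the definition of $\wt Mf(n',b)$ gives
\[
\wt Mf(n',b) \;\geq\; \min\!\bigl(A_{r_{\vec n_i}}f(c_{\vec n_i}),\,A_{r_{\vec n_j}}f(c_{\vec n_j})\bigr) \;=\; \min\!\bigl(\wt Mf(\vec n_i),\,\wt Mf(\vec n_j)\bigr),
\]
which contradicts the defining strict inequality of the local-minimum string $[b^-,b^+]$ sitting strictly below both adjacent local-maximum strings. The only point that needs care is the case analysis on the location of $q_d$ relative to $[m_i,m_j]$, but this is routine bookkeeping once the interval property of $I_i$ and $I_j$ is in place, so I anticipate no genuine obstacle.
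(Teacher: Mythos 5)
Your argument rests on the same geometric mechanism as the paper's proof: the trace of an $\ell^\infty$-cube on the line $l$ is an interval of $d$-th coordinates, so an optimizing cube that contains both a point of a string of local maxima and the point $\vec q$ can be reused as a competitor at other points of $l$, contradicting one of the strict inequalities in the extremal structure. The paper runs this at the endpoints of a single string: if $f(\vec q)$ contributes positively at some point of a string $[a,a+n]$ as in \eqref{string lem proof}, then necessarily $a-1<q_d<a+n+1$, since otherwise the cube would contain $(m',a-1)$ or $(m',a+n+1)$ and violate an endpoint inequality; uniqueness then follows because distinct strings occupy disjoint ranges of the $d$-th coordinate. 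You instead argue by contradiction through an intermediate string of local minima, which is a legitimate variant of the same idea.

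One step needs repair. The two strings containing $\vec n_i$ and $\vec n_j$ need not be consecutive in the list \eqref{Sec4_sequence}, so the string of local minima $[b^-,b^+]$ you select is only guaranteed to lie strictly below \emph{its own} adjacent strings of maxima, not below $\min\bigl(\wt Mf(\vec n_i),\wt Mf(\vec n_j)\bigr)$; if further strings of maxima sit between $m_i$ and $m_j$, your inequality $\wt Mf(n',b)\ge\min\bigl(\wt Mf(\vec n_i),\wt Mf(\vec n_j)\bigr)$ yields no contradiction for that particular $[b^-,b^+]$. The fix is immediate from what you already established: since $I_i\cup I_j$ covers all of $[m_i,m_j]\cap\Z$, the lower bound holds at \emph{every} $b\in[m_i,m_j]$, so it suffices to evaluate it at $b=a_i^++1$ when $\wt Mf(\vec n_i)\le\wt Mf(\vec n_j)$ (where $\wt Mf(n',a_i^++1)<\wt Mf(\vec n_i)$ by the definition of the string of maxima), and at $b=a_j^--1$ in the opposite case. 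With that choice the contradiction is genuine and your proof closes; alternatively, localizing $q_d$ inside a single string, as the paper does, avoids the issue altogether.
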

\begin{proof} Assume without loss of generality that $l=\{(m_{1},m_{2},\dots,m_{d-1}, x); \, x\in\Z\}=\{(m',x);\,x \in\Z\}.$
Consider a string of local maxima of $\wt Mf$ in $l$
\begin{equation}\label{string lem proof}
\wt Mf(m',a-1) < \wt Mf(m',a) = \ldots = \wt Mf(m',a+n) > \wt Mf(m',a+n+1).
\end{equation}
Let
$$
\wt Mf(m',a+i)=A_{r_{(m',a+i)}}f( c_{(m',a+i)})\ \ \forall\ 0\leq i\leq n.
$$
Given $\vec q=(q_{1},q_{2},\dots,q_{d})\in\Z^{d}$, a necessary condition for the contribution of $f(\vec q)$ to $ A_{r_{(m',a+i)}}f( c_{(m',a+i)})$ to be positive for some $i$ is that $a-1< q_{d}< a+n+1$ (otherwise this would violate one of the endpoint inequalities in \eqref{string lem proof}). The result follows from this observation.
\end{proof}

\subsection{Proof of Theorem \ref{main theo noncent}}
Given $\vec p\in\Z^{d}$ and a line $l$ in the lattice $\Z^{d}$, we define $d(l,\vec p)= \min\{|\vec p-\vec m|_{\infty};\, \vec m\in l\}$
and $d(l,\vec p)_{+}=\max\{1,d(l,\vec p)\}$. As consequence of Lemma \ref{a lo mas un}, given $\vec p=(p_{1},p_{2},\dots,p_{d-1},p_{d})\in\Z^{d}$ and a line $l=\{(n_{1},n_{2},\dots,n_{d-1},x)\in\Z^{d}; \, x\in\Z\}$ such that $\big|\{i\in\{1,2,\dots,d-1\};|n_{i}-p_{i}|=d(l,\vec p)\}\big|=j$, the contribution of $f(\vec p)$ to the right-hand side of \eqref{sum min max} is less than or equal to
\begin{equation}\label{dist line}
\frac{2}{(d(l,\vec p)+1)^{j}(d(l,\vec p))_{+}^{d-j}}.
\end{equation}
In fact, if an $\ell^\infty$-cube contains $\vec p$ and a point in $l$ then it must have side at least $d(l,\vec p)$, and it must contain $(d(l,\vec p)+1)$ lattice points in each direction $\vec e_i$ for $i$ such that $|n_{i}-p_{i}|=d(l,\vec p)$. In the other $d-j$ directions the cube contains at least $d(l,\vec p)$ lattice points. This leads to \eqref{dist line}.

\smallskip

If equality in \eqref{dist line} is attained for a point $\vec p$ and a line $l$, then there is a point $\vec q \in l$ that realizes the distance to $\vec p$, belongs to a string of local maxima of $l$, and such that $\vec p\in c_{\vec q}+Q(r_{\vec q})$. Moreover, this string of local maxima must be unique, otherwise $f(\vec p)$ would also have a negative contribution coming from a string of minimum in \eqref{sum min max}. In particular this implies that $\wt Mf(\vec p)\geq \wt Mf(\vec n)$ for all $\vec n\in l$. If we fix a point $\vec p$ and assume that equality in \eqref{dist line} is attained {\it for all lines} $l$ in our lattice, then $\wt Mf(\vec p)\geq \wt Mf(\vec n)$ for all $\vec n\in \Z^d$.

\smallskip
 
Therefore, as $\vec p$ belong to $d$ lines of the lattice $\Z^{d}$, and given $k\in\N$ and $j\in\{1,2,\dots,d-1\}$ there exist $2^{j}{{d-1 \choose j}}(2(k-1)+1)^{d-1-j}$ lines $l=\{(n_{1},n_{2},\dots,n_{d-1},x);\,x\in\Z\}$ such that $d(l,\vec p)=k$ and $\big|\{i\in\{1,2,d\dots,d-1\};|n_{i}-p_{i}|=k\}\big|=j$, the contribution of $f(\vec p)$ 
to the variation of the maximal function in $\Z^{d}$ is less than or equal to
\begin{eqnarray*}
&&2d+d\sum_{k\geq 1}\sum_{j=1}^{d-1}2^{j}{d-1 \choose j}(2k-1)^{d-1-j}\frac{2}{(k+1)^{j}\,k^{d-j}}\\
&&=2d+\sum_{k\geq 1}\frac{2d}{k}\sum_{j=1}^{d-1}{d-1\choose j}\left(\frac{2}{k+1}\right)^{j}\left(\frac{2k-1}{k}\right)^{d-1-j}\\
&&=2d+\sum_{k\geq 1}\frac{2d}{k}\left(\left(\frac{2}{k+1}+\frac{2k-1}{k}\right)^{d-1}-\left(\frac{2k-1}{k}\right)^{d-1}\right).
\end{eqnarray*}
This concludes the proof of \eqref{eq noncent d>1}.

\smallskip

If $f$ is a delta function, with $f(\vec n)=0$ for all $n\in\Z^{d}\setminus\{\vec p\}$ for some $p\in\Z^{d}$, it is easy to see that we have equality in \eqref{dist line} for the contribution of $|f(\vec p)|$ to all lines $l$, which implies equality in \eqref{eq noncent d>1}. On the other hand, let us assume that $f:\Z^{d}\to\R$ is a nonnegative function that verifies the equality in \eqref{eq noncent d>1}. We define $P=\{\vec t\in\Z^{d}; f(\vec t)\neq 0\}$ and thus
$$
\var\wt Mf=\left(2d+\sum_{k\geq 1}\frac{2d}{k}\left(\left(\frac{2}{k+1}+\frac{2k-1}{k}\right)^{d-1}-\left(\frac{2k-1}{k}\right)^{d-1}\right)\right)\sum_{t\in P}f(t).
$$
Then, given $\vec s\in P$, if there exists $\vec u\in P\setminus\{\vec s\}$, we consider a line $l$ in the lattice $\Z^{d}$ such that $\vec s\in l$ and $\vec u\notin l$. The contribution of $f(\vec s)$ to $l$ must be 2, $\wt Mf(\vec s)=f(\vec s)$ belongs to the unique string of local maxima of $\wt Mf$ in $l$ and the right-hand side of \eqref{sum min max}  must be $2f(\vec s)$, by the previous analysis. Therefore the contribution of $f(\vec u)$ to the line $l$ is $0$ and then $f(\vec u)$ does not provide the maximum contribution as predicted in \eqref{dist line}, hence \eqref{eq noncent d>1} cannot attained. We conclude that $f$ must be a delta function.

\section*{Acknowledgents}
\noindent I am deeply grateful to my advisor Emanuel Carneiro for encouraging me to work on this problem, for all the fruitful discussions and for his guidance throughout the preparation of this paper. I would like to thank Renan Finder and Esteban Arreaga for all the interesting discussions related to the proof of Lemma \ref{fund lemma}. I also want to thank Mateus Sousa for a careful review of this paper. The author also acknowledges support from CAPES-Brazil.

\end{document}